\documentclass[11pt]{article}
\usepackage{geometry}
\usepackage{parskip}
\usepackage{hyperref}
\usepackage{xcolor}
\usepackage{enumitem}
\usepackage{titlesec}
\usepackage{amsmath, amssymb, amsthm}
\usepackage{graphicx}
\usepackage{array}
\usepackage{booktabs}
\usepackage{tikz-cd}
\usepackage{mathtools}
\usepackage{pgfplots}
\pgfplotsset{compat=1.18}
\usetikzlibrary{calc, arrows.meta, positioning, backgrounds}

\definecolor{gridgreen}{RGB}{144,238,144}
\definecolor{conecyan}{RGB}{0,191,255}
\definecolor{conered}{RGB}{255,0,0}
\definecolor{linebrown}{RGB}{165,42,42}
\definecolor{textblue}{RGB}{0,0,205}
\definecolor{newgreen}{RGB}{0,128,0}

\definecolor{primary}{RGB}{0, 51, 102}
\definecolor{secondary}{RGB}{100, 100, 100}
\definecolor{linkblue}{RGB}{0, 0, 200}

\titleformat{\section}{\large\bfseries}{\thesection}{1em}{}
\titleformat{\subsection}{\bfseries}{\thesubsection}{1em}{}
\titleformat{\subsubsection}{\itshape}{\thesubsubsection}{1em}{}

\hypersetup{
	colorlinks=true,
	linkcolor=linkblue,
	urlcolor=linkblue,
	citecolor=linkblue,
}

\newtheorem{theorem}{Theorem}[section]
\newtheorem{proposition}[theorem]{Proposition}

\newtheorem{corollary}[theorem]{Corollary}
\newtheorem{definition}[theorem]{Definition}
\newtheorem{example}[theorem]{Example}
\newtheorem{remark}[theorem]{Remark}

\newtheorem*{theorem*}{Theorem}

\newcommand{\PP}{\mathbb{P}}
\newcommand{\ZZ}{\mathbb{Z}}
\newcommand{\CC}{\mathbb{C}}
\newcommand{\QQ}{\mathbb{Q}}
\newcommand{\F}{\mathbb{F}}

\begin{document}
	
	\begin{center}
		{\huge A Finiteness Theorem for Quartic K3-Fibred Calabi--Yau Threefolds in Scrolls} \\[0.8cm]
		\Large Geoffrey Mboya \\[0.3cm]
	\end{center}
	
	\begin{abstract}
		\noindent We study a restricted form of Gross's finiteness problem for algebraic minimal Calabi--Yau threefolds: those fibred by quartic K3 surfaces and realised as anticanonical hypersurfaces in stacky scrolls $\PP^1\times[\PP^3/\ZZ_n]$. Beyond the ten straight-scroll families of \cite[Table~1]{MboyaSzendroi2023}, we introduce orbifold scrolls $(\PP^1\times\PP^3)/\ZZ_n$ and apply the Reid--Shepherd-Barron--Tai criterion to determine which admit canonical anticanonical hypersurfaces. Only finitely many weight vectors arise for each $n$; we classify $n=1,2,3$ completely and compute all Hodge numbers, giving fourteen deformation families in total. Engel, Filipazzi, Greer, Mauri and Svaldi \cite{EngelFilipazziGreerMauriSvaldi2025} have since established boundedness for fibred Calabi--Yau threefolds in general, settling the existence question this restricted case exemplifies; what remains, and what we supply here, is the explicit classification: weight vectors, singularity types, and Hodge data.
		
		\medskip
	\end{abstract}
	
	
	\section{Introduction}
	
	Algebraic K3 surfaces have well-understood moduli: a countable union of $19$-dimensional components of the $20$-dimensional K\"ahler moduli space. In dimension three the situation is different: non-K\"ahler Calabi--Yau threefolds occur in infinitely many topological types, so finiteness questions require restriction to the algebraic case.
	
	\begin{definition}[\cite{Gross1994}, \S0]
		A \emph{minimal Calabi--Yau threefold} $Y$ is projective with $\QQ$-factorial terminal singularities satisfying
		\[
		K_Y = 0,\qquad \chi(\mathcal{O}_Y)=h^{1,0}(Y)=h^{2,0}(Y)=0.
		\]
		A projective threefold is an \emph{algebraic Calabi--Yau threefold} if it is birational to some minimal Calabi--Yau threefold $Y$ as above, i.e.\ if $Y$ is (a choice of) its minimal model.
	\end{definition}

	\emph{Conjecture (\cite{Gross1994}). There are only finitely many families of algebraic minimal Calabi--Yau threefolds.}

	This is open in general. Gross \cite{Gross1994} proved that algebraic Calabi--Yau threefolds admitting an elliptic fibration over a rational base are \emph{birationally bounded}, a statement weaker than deformation-finiteness, using the minimal model program, the classification of the possible base surfaces, and finiteness of the relevant Tate--Shafarevich groups. Engel, Filipazzi, Greer, Mauri and Svaldi \cite{EngelFilipazziGreerMauriSvaldi2025} have since proved boundedness for fibred Calabi--Yau threefolds in general, covering in particular threefolds fibred by K3 surfaces and settling the existence question that motivates our title. What a boundedness statement of this generality cannot supply is the explicit weight vectors, singularity types, or Hodge numbers of any specific family; that effective classification, for quartic K3-fibred threefolds in (orbifold) scrolls over $\PP^1$, is the content of this paper.
	
	We study threefolds fibred by quartic K3 surfaces $S_4\subset\PP^3$, realised as anticanonical hypersurfaces in scrolls over $\PP^1$: a more rigid setting than \cite{Gross1994,EngelFilipazziGreerMauriSvaldi2025}, since the base is always $\PP^1$ and the ambient scrolls form an explicit, classifiable list.
	
	Quartic K3 surfaces head the Iano-Fletcher--Reid list of weighted K3 hypersurfaces \cite{IanoFletcher2000}. The simplest fibration is $X_{(2,4)}\subset\PP^1\times\PP^3$; complete-intersection constructions appear in \cite{CandelasDale1988,HeCandelas1990}, with fibration structures classified in \cite{Lara2017}. Mullet \cite{Mullet2009} classified nonsingular anticanonical hypersurfaces fibred by weighted K3 quartics in weighted $\PP^3$-bundles over $\PP^1$, finding nine unweighted families. Allowing isolated singularities, the author and Szendr\H{o}i \cite{MboyaSzendroi2023} found a tenth family with three ordinary double points, and the author's thesis \cite{Mboya2022} shows these ten \emph{straight} scroll families of \S2 are complete.
	
	For $n=1$ this is the starting point. $\PP^1\times\PP^3$ also admits diagonal cyclic actions mixing base and fibre; quotienting yields \emph{orbifold} scrolls $\mathbb{F}^m_n=(\PP^1\times\PP^3)/\ZZ_n$ with quotient singularities along curves, giving quartic K3-fibred Calabi--Yau threefolds outside the weighted-scroll framework above, where only the fibre carries weights.
	
	We classify weight vectors $m=(m_1,\dots,m_4)$ and orders $n$ for which the orbifold quartic threefold has at worst canonical singularities, applying the Reid--Shepherd-Barron--Tai criterion \cite{Reid1980,Reid1985,Tai1982} directly at fixed loci. Finitely many admissible weight vectors arise for every $n$; we determine them for $n=1,2,3$ and compute all Hodge numbers.
	
	\subsection*{Acknowledgements}
	Supported by the University of Cambridge Trinity College Visiting Scholars Fellowship 2023 and Simons Foundation Award 488625. I thank Mark Gross for introducing me to this problem and for discussions and comments that helped improve this work.
	
	\section{Straight quartic scrolls}
	
	For integers $a_4\ge a_3\ge a_2\ge a_1=0$, the \emph{straight scroll} is
	\[
	\F:=\F(0,a_2,a_3,a_4)=\PP(\mathcal{O}_{\PP^1}\oplus\mathcal{O}_{\PP^1}(a_2)\oplus\mathcal{O}_{\PP^1}(a_3)\oplus\mathcal{O}_{\PP^1}(a_4)),
	\]
	a nonsingular $\PP^3$-bundle over $\PP^1$. We use \emph{straight} to distinguish $\F$ from the orbifold scrolls $\F^m_n$ of \S3; the two sets of weights play unrelated roles and are given different letters ($a$, $m$) throughout. With base coordinates $t_1,t_2$ and fibre coordinates $x_1,\dots,x_4$, the anticanonical class is $-K_F=L_{2-a_2-a_3-a_4,4}$, where $L_{p,q}$ has bidegree $(p,q)$. A general member of $|{-}K_F|$, when nonempty, is
	\[
	X=V\Bigl(\sum_{(q_1,\dots,q_4)\vdash 4}\alpha_{(q_j)}(t_1,t_2)x_1^{q_1}x_2^{q_2}x_3^{q_3}x_4^{q_4}\Bigr)\subset\F,
	\]
	fibred by quartic K3 surfaces, where $\alpha_{(q_j)}$ has degree $2+(q_2-1)a_2+(q_3-1)a_3+(q_4-1)a_4$ in $35$ quartic monomials.
	
	This is the base case ($n=1$) for the classification below.
	
	\begin{theorem}[\cite{Mullet2009,MboyaSzendroi2023,Mboya2022}]\label{thm:straight}
		Exactly ten families of quartic K3-fibred Calabi--Yau threefolds with at worst canonical (in fact isolated ordinary double point) singularities embed as general anticanonical hypersurfaces in a straight scroll $\F(0,a_2,a_3,a_4)$ over $\PP^1$, listed in Table \ref{tab:straight}.
	\end{theorem}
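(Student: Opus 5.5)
The plan is to reduce the classification to a finite combinatorial problem on the scroll weights $(0,a_2,a_3,a_4)$, and then settle the singularities of the general member case by case. The basic tool is the degree formula above: the coefficient $\alpha_{(q_j)}$ of the monomial $x^q$ has degree
\[
d(q)=2+(q_2-1)a_2+(q_3-1)a_3+(q_4-1)a_4,
\]
and it vanishes identically when $d(q)<0$.

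\textbf{Step 1: finitely many candidate scrolls.} Consider the monomials that govern the base locus along the curves $C_i=\{x_j=0,\ j\neq i\}$ and along the subscrolls $\Sigma_{\ge k}=\{x_1=\dots=x_{k-1}=0\}$.
\begin{itemize}
\item If $X$ is to avoid containing the section $C_4$ with nonisolated singularities, we need $x_4^4$, or some $x_4^3x_j$, to appear. Since $d(x_4^4)=2+3a_4-a_2-a_3$ is always positive, $x_4^4$ is harmless.
\item The real constraints come from the low-weight directions. At $C_1$, the monomials $x_1^4$ and $x_1^3x_j$ have degrees $2-a_2-a_3-a_4$ and $2-a_2-a_3-a_4+a_j$.
\item If all of $x_1^4,\ x_1^3x_2,\ x_1^3x_3$ vanish, then $X\supset C_1$ and $X$ is singular along $C_1$ wherever the remaining coefficient of $x_1^3x_4$ fails to give a smooth section. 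The general element is then singular along points, or along the whole curve.
\end{itemize}
More systematically, I would apply the standard Bertini-type criterion for each coordinate subscroll $\Sigma_{I}=\{x_i=0,\ i\notin I\}$. If $|{-}K_F|$ restricted to a neighbourhood fails to contain monomials of $x_I$-degree $\ge 3$ with nonnegative $d$, then $X\supset\Sigma_I$ with multiplicity $\ge 2$ along a positive-dimensional locus, which is not canonical with isolated singularities. Concretely this gives inequalities such as $a_4\le 2+a_2+a_3$ (from $x_1^3x_4$, or $x_2^3x_4$ after shifting), $a_3\le 2+a_4-\dots$, and so on. Together with $a_4\ge a_3\ge a_2\ge0$, these cut the weights down to a finite list. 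I expect this to reproduce Mullet's nine smooth cases plus boundary cases.

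\textbf{Step 2: singularity analysis per candidate.}
\begin{itemize}
\item Off the base loci, Bertini gives smoothness.
\item On $C_i$ or $\Sigma_I$, write local equations. When $X\supset C_1$ with $x_1^3$-terms linear in the other variables, $X$ is smooth along $C_1$ except where the $2\times$-style Jacobian of the coefficients $\alpha_{(3,\dots)}$ degenerates. That happens at finitely many points, given by the zeros of a determinant or of a coefficient of degree $N$.
\item At those points, check that the quadratic part in the $x_1^2$-terms is nondegenerate generically. This yields ordinary double points, $N$ of them; in the tenth family $N=3$.
\item Cases where the degenerate locus is a whole curve, or the tangent cone is worse, are discarded, either because they are non-isolated or because they fail to be canonical (e.g. a multiplicity-$3$ point of a hypersurface is not canonical).
\end{itemize}

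\textbf{Step 3: Calabi--Yau conditions and distinctness.} Adjunction gives $K_X=0$. The Lefschetz/Kodaira vanishing on the scroll gives $h^{1,0}=h^{2,0}=0$. Ordinary double points are terminal, and they are $\QQ$-factorial if the small resolution picture is checked, or one simply phrases the result with canonical singularities. Finally, compute Hodge or Euler numbers, e.g. via $\chi(X)=\chi(\text{fibre})\cdot 2+$ corrections from singular fibres or via toric methods. This shows that the ten families are pairwise non-deformation-equivalent, and that isomorphic scrolls (e.g. under twisting by $\mathcal{O}(a)$) are not double-counted.

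\textbf{Main obstacle.} The hard step is the completeness direction in Step 1: showing that every weight vector outside the list forces non-isolated or non-canonical singularities. That requires a careful exhaustion over which subscrolls lie in the base locus. I would organise it by the largest $k$ such that $\Sigma_{\le k}=\{x_{k+1}=\dots=x_4=0\}\subset X$. In each case I would exhibit the vanishing monomials and bound $a_4-a_2-a_3$, following the structure of \cite{Mboya2022}.
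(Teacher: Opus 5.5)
The paper gives no argument for this theorem (it is quoted from the cited sources), so your outline has to stand on its own. Its overall strategy of monomial degrees, coordinate subscrolls and local analysis is the right one, but the completeness step rests on a false implication. You discard every scroll whose general member is singular along a curve on the grounds that it is ``not canonical''. Non-isolated singularities here can be canonical.

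\emph{A counterexample to that step.} In $\F(0,0,1,3)$ one has $d(q)=q_3+3q_4-2$, so the base locus is $\Sigma_{12}=\{x_3=x_4=0\}\cong\PP^1\times\PP^1$. The monomials $x_1^ix_2^jx_3$ with $i+j=3$ have $d=-1$, so $\partial_{x_3}p|_{\Sigma_{12}}\equiv0$. Meanwhile $\partial_{x_4}p|_{\Sigma_{12}}$ is a $(1,3)$-form whose zero locus is a rational curve $\Gamma\subset\mathrm{Sing}(X)$.
\begin{itemize}
\item Near a general point of $\Gamma$ the equation reads $x_4u+H(x_1,x_2)\,x_3^2+\cdots$, where $u$ is a local equation of $\Gamma$ in $\Sigma_{12}$ and $H$ is a constant-coefficient binary quadric. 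This is a transversal $A_1$ along $\Gamma$, degenerating to $cA_2$ at the two points where $H|_\Gamma=0$.
\item These are compound Du Val points, hence canonical. Moreover $X$ is irreducible with $K_X=0$ and $h^1(\mathcal O_X)=h^2(\mathcal O_X)=0$.
\item $\F(0,1,2,2)$ (transversal $A_1$ along $C_1$), $\F(0,0,1,4)$ and $\F(0,0,2,3)$ behave the same way.
\end{itemize}
Your aside that a hypersurface triple point is never canonical is also false: the cone $x^3+y^3+z^3+w^3=0$ has discrepancy $3-m=0$.

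\emph{The fix.} What excludes these cases is the fibration, not canonicity. In every such case the singular curve is horizontal: it is either the section $C_1$ or a $(d,3)$-curve in $\Sigma_{12}$. A fibre through a singular point of $X$ is itself singular, so the general fibre is a quartic with $A_k$ points rather than a smooth quartic K3. Alternatively, you must build isolatedness into the hypothesis (or terminality, which implies it in dimension three). One of these arguments has to be made explicitly. If canonical threefolds with ADE-singular quartic fibres were admitted, $\F(0,0,1,3)$ would be an eleventh case.

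\emph{Step 1 does not produce the finite list.} Your inequality $a_4\le2+a_2+a_3$ ``from $x_1^3x_4$'' is wrong, since $d(x_1^3x_4)=2-a_2-a_3$ does not involve $a_4$. A reduction that works is as follows.
\begin{itemize}
\item If $A=a_2+a_3+a_4\le2$, then $d(x_1^4)\ge0$, all $35$ monomials occur, and $|{-}K_{\F}|$ is base-point-free. This gives families 1--4.
\item If $A\ge3$, then $C_1\subset X$, and $X$ is smooth at the general point of $C_1$ iff $d(x_1^3x_4)\ge0$, i.e.\ $a_2+a_3\le2$.
\item Irreducibility requires some monomial in $x_1,x_2,x_3$ to survive; the best is $x_3^4$, giving $a_4\le2-a_2+3a_3$.
\item Hence $(a_2,a_3)=(0,0)$ is reducible.
\item For $(0,1)$ and $(0,2)$, the $\Sigma_{12}$ computation above forces $a_4=2$, giving families 5 and 6. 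The three ODPs of family 5 are the $(0,3)\cdot(1,3)=3$ common zeros of $\partial_{x_3}p$ and $\partial_{x_4}p$ on $\Sigma_{12}$, not points of $C_1$ as your Step 2 suggests.
\item For $(1,1)$ one gets $1\le a_4\le4$, giving families 7--10.
\end{itemize}
This case analysis is the whole content of the theorem, and you defer it.

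\emph{Step 3 should be dropped.} Do not try to prove distinctness with Hodge numbers. $\F(0,0,0,2)$ deforms to $\F(0,0,1,1)$ and $h^0(-K)=105$ on both, so the general member of family 3 is a specialisation of family 4. The ten families are ten scrolls, not ten deformation types.
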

	
	\begin{table}[h]
		\centering
		\begin{tabular}{c l l c}
			\hline
			No. & $\F(0,a_2,a_3,a_4)$ & Singularities of general $X\in|{-}K_{\F}|$ & $\dim\mathcal{M}_{-K_F}$ \\
			\hline
			1 & $\F(0,0,0,0)$ & nonsingular & 86 \\
			2 & $\F(0,0,0,1)$ & nonsingular & 118 \\
			3 & $\F(0,0,0,2)$ & nonsingular & 83 \\
			4 & $\F(0,0,1,1)$ & nonsingular & 86 \\
			5 & $\F(0,0,1,2)$ & 3 ODPs on $\mathrm{Bs}(|{-}K_{\F}|)$ & 86 \\
			6 & $\F(0,0,2,2)$ & nonsingular & 91 \\
			7 & $\F(0,1,1,1)$ & nonsingular & 73 \\
			8 & $\F(0,1,1,2)$ & nonsingular & 86 \\
			9 & $\F(0,1,1,3)$ & nonsingular & 89 \\
			10 & $\F(0,1,1,4)$ & nonsingular & 95 \\
			\hline
		\end{tabular}
		\caption{Straight quartic scrolls carrying quartic-fibred Calabi--Yau threefolds. Families 1--4 have $-K_{\F}$ base-point-free; families 6--10 are due to \cite{Mullet2009}; family 5 is due to \cite{MboyaSzendroi2023}.}
		\label{tab:straight}
	\end{table}
	
	Family 5 admits a small projective crepant resolution $\mathrm{Bl}_{D_{34}}X\to X$ along $D_{34}=\mathrm{Bs}(|{-}K_{\F}|)\cong\PP^1\times\PP^1$. Family 8, $\F(0,1,1,2)\cong\PP(\mathcal{O}_{\PP^1}(-1)\oplus\mathcal{O}_{\PP^1}^{\oplus 2}\oplus\mathcal{O}_{\PP^1}(1))$, was studied by Gross and Ruan \cite{Ruan1996}. Theorem \ref{thm:straight} settles $n=1$; we turn to $n\ge2$.
	
	\section{Cyclic quotients of the quartic scroll}
	
	\subsection{Construction}
	
	Fix $n\ge2$ and $0=m_1\le m_2\le m_3\le m_4$. Let $1\ne\delta\in\ZZ_n$ act diagonally on $\PP^1_{[t_i]}\times\PP^3_{[x_j]}$ by
	\[
	\delta\cdot([t_i],[x_j])=([t_i],[\delta^{m_j}x_j]),
	\]
	trivially on the base, and set $\mathbb{F}^m_n:=(\PP^1\times\PP^3)/\ZZ_n$. Unlike the weighted-fibre construction cited above, here the fibres are $\PP^3$ with orbifold singularities, and the total space acquires quotient singularities along the four sections $C_j = \PP^1 \times \{ [0 : \cdots : 1 : \cdots : 0] \}$, with stabiliser $\ZZ_n$ along each, provided the weights $m_j$ are pairwise distinct modulo $n$; Remark \ref{rem:wellformed} and Example \ref{ex:smooth} treat the exceptional case.
	
	A monomial $x_1^{q_1}\cdots x_4^{q_4}$, $(q_j)\vdash4$, transforms with weight $\sum_j m_jq_j$; with $\beta\equiv\sum_j m_j\pmod n$ the multilinear monomial $x_1x_2x_3x_4$ has weight $\beta$, so a $\ZZ_n$-invariant quartic of this weight always exists, and $\Omega=dt_1\wedge dt_2\wedge dx_1\wedge\cdots\wedge dx_4/dp$ is $\delta$-invariant. Hence $X=V(p)\subset\mathbb{F}^m_n$ is anticanonical and, when $H^0(K_X)=\CC$, Calabi--Yau, provided the $\ZZ_n$-action on $\PP^1\times\PP^3$ is free in codimension $1$, so that $K_{\PP^1\times\PP^3}$ descends to $K_{\F^m_n}=-2L-4H$ (Example \ref{ex:smooth}).
	
	\begin{definition}\label{def:CY}
		An orbifold hypersurface $X\subset\F^m_n$ fibred by quartic K3 surfaces is a \emph{Calabi--Yau threefold} if $H^0(K_X)=\CC$, with $\beta\in\ZZ_n$ satisfying
		\[
		m_1q_1+\cdots+m_4q_4\equiv\beta\equiv m_1+\cdots+m_4\pmod n
		\]
		for $(q_j)\vdash4$.
	\end{definition}
	
	\begin{remark}[Divisor class group]\label{rem:clgroup}
		When the $\ZZ_n$-action is free in codimension $1$, $\mathrm{Pic}(\PP^1\times\PP^3)\cong\ZZ^2$ is fixed pointwise, giving an extension
		\[
		0\to\mathrm{Hom}(\ZZ_n,\CC^*)\to\mathrm{Pic}_{\ZZ_n}(\PP^1\times\PP^3)\to\mathrm{Pic}(\PP^1\times\PP^3)^{\ZZ_n}\to0.
		\]
		This splits: the diagonal action of \S3.1 linearises both generators canonically, $L=\mathcal O(1,0)$ trivially since $\delta$ fixes $\PP^1$, and $H=\mathcal O(0,1)$ via the tautological character of $\mathrm{diag}(\delta^{m_1},\dots,\delta^{m_4})$ on $\CC^4$. Hence
		\[
		\mathrm{Cl}(\F^m_n)\cong\ZZ\oplus\ZZ\oplus\ZZ_n,
		\]
		with torsion generated by a section $C_j$, so $h^{1,1}(\F^m_n)=2$ (Proposition \ref{prop:hodge}) and $K_{\F^m_n}$ is the pullback of $K_{\PP^1\times\PP^3}$; both statements fail exactly when freeness in codimension $1$ fails (Example \ref{ex:smooth}).
	\end{remark}
	
	\subsection{The Reid--Shepherd-Barron--Tai criterion}
	
	\begin{theorem}[Reid--Shepherd-Barron--Tai, \cite{Reid1980,Reid1985,Tai1982}]\label{thm:RSBT}
		Let $g\in\mathrm{GL}(s,\CC)$ have order $r$, eigenvalues $\zeta^{c_1},\dots,\zeta^{c_s}$ with $0\le c_i<r$, and $\mathrm{age}(g)=\frac1r\sum_ic_i$. If $G\subset\mathrm{GL}(s,\CC)$ has no quasi-reflections, $\CC^s/G$ is canonical iff $\mathrm{age}(g)\ge1$ for all $1\ne g\in G$, terminal iff $\mathrm{age}(g)>1$.
	\end{theorem}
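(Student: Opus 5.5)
The plan is to translate canonicity and terminality of $X=\CC^s/G$ into a statement about discrepancies of divisorial valuations, and then compute those discrepancies via the quotient map $\pi\colon\CC^s\to X$.

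\textbf{Setup.} Because $G$ contains no quasi-reflections, $\pi$ is étale in codimension $1$. So $K_X$ is $\QQ$-Cartier with $\pi^*K_X=K_{\CC^s}$, and pluricanonical forms on $X$ are exactly the $G$-invariant pluricanonical forms on $\CC^s$. The invariant form is $(dz_1\wedge\cdots\wedge dz_s)^{\otimes r}$, with $r=|G|$.

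\textbf{Step 1: reduce to cyclic groups.} Let $E$ be a divisor over $X$, with valuation $v_E$. Choose a valuation $w$ on $\CC(z_1,\dots,z_s)$ extending $v_E$. Its inertia group $I\subset G$ is cyclic, because the residue extension is tame in characteristic $0$, so inertia is cyclic. Its ramification index is $e=|I|$. The standard Hurwitz-type formula then gives
\[
a(E,X)+1=\tfrac1e\bigl(a(F,\CC^s/I)+1\bigr)\cdot(\text{unramified factor}),
\]
which shows that the minimum discrepancy of $X$ is governed by the quotients $\CC^s/\langle g\rangle$ over all $g\in G$. Conversely, each cyclic subgroup arises as such an inertia group for suitable valuations. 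So it suffices to treat $G=\langle g\rangle$ cyclic of order $r$. Passing to a subgroup keeps the no-quasi-reflection hypothesis.

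\textbf{Step 2: the cyclic case.} Diagonalise $g=\mathrm{diag}(\zeta^{c_1},\dots,\zeta^{c_s})$. Then $\CC^s/\langle g\rangle$ is the affine toric variety of the cone $\sigma=\mathbb{R}_{\ge0}^s$ in the lattice $N=\ZZ^s+\ZZ\cdot\frac1r(c_1,\dots,c_s)$.

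\begin{itemize}
\item Toric divisors over it correspond to primitive $v\in N\cap\sigma$.
\item Since $K=-\sum D_i$, such a divisor has discrepancy $\langle(1,\dots,1),v\rangle-1$.
\item The lattice points of $N$ in $\sigma$ not lying in $\ZZ^s$ are the translates of $\frac1r(kc_1 \bmod r,\dots)$ for $k=1,\dots,r-1$. Their coordinate sums are $\mathrm{age}(g^k)$, so these points produce discrepancy $\mathrm{age}(g^k)-1$.
\item The points in $\ZZ^s$ come from $\CC^s$ itself and give discrepancy $\ge s-1$, or $0$ for the primitive ones with a single coordinate $1$. The latter are not exceptional over $X$, since there are no quasi-reflections.
\end{itemize}

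Hence all toric exceptional divisors have discrepancy $\ge0$ (respectively $>0$) iff $\mathrm{age}(h)\ge1$ (respectively $>1$) for all $1\ne h\in\langle g\rangle$.

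\textbf{Step 3: non-toric valuations.} For a toric variety it is standard that the minimal discrepancy is attained on torus-invariant divisors. To see this, take a toric log resolution; every other divisor is exceptional over a smooth variety with an SNC boundary of discrepancy $\ge$ the toric ones. Combining with Step 1 gives both directions: canonical iff all ages are $\ge1$, terminal iff all are $>1$.

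\textbf{Main obstacle.} I expect the hardest step to be Step 1, namely justifying that discrepancies over $\CC^s/G$ are controlled by cyclic subgroups. The difficulty is doing the ramification bookkeeping for the valuation extension carefully. The route I would try first is Reid's: pass to the decomposition and inertia groups of $w$, replace $X$ by the intermediate quotient $\CC^s/I$ (which is étale over $X$ near the centre of $w$ once we localise), and apply the cyclic computation there.
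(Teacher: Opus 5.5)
The paper does not prove this statement; it quotes it from Reid and Tai, so the comparison is with the standard argument. Your skeleton is that argument in substance and is sound:
\begin{itemize}
\item lift $v_E$ to $w$ on $\CC(z_1,\dots,z_s)$;
\item use that in residue characteristic $0$ the inertia group $I$ is cyclic with $e=|I|$;
\item pass to the inertia field, over which $w$ is unramified.
\end{itemize}
The only real choice is the cyclic step, which you do torically. It can also be done directly on the valuation. Choose the generator $g$ of $I$ with $g(\pi)\equiv e^{2\pi i/e}\pi$ and eigencoordinates $g z_i=e^{2\pi i a_i/e}z_i$, $0\le a_i<e$. Triviality of $I$ on the residue field forces $w(z_i)\equiv a_i\pmod e$, hence $w(z_i)\ge a_i$. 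Since $(\CC^s,\sum_i\{z_i=0\})$ is log canonical, $a(E,X)+1=\frac1e\bigl(a(F,\CC^s)+1\bigr)\ge\frac1e\sum_i w(z_i)\ge\mathrm{age}(g)$. The converse then uses a monomial valuation with weights proportional to $(c_1,\dots,c_s)$. That route needs no resolution. Your toric Steps 2--3 need a toric log resolution and the SNC discrepancy lemma, but in return compute the minimal discrepancy $\min_k\mathrm{age}(g^k)-1$ of $\CC^s/\langle g\rangle$ exactly. Either route works.

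Three repairs are needed, none fatal.

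(1) The displayed formula has a spurious $\frac1e$. Set $w_I:=w|_{\CC(z)^I}$. It has ramification index $1$ over $v_E$, and $\CC^s/I\to X$ is \'etale in codimension one because $G$ has no quasi-reflections, so $K_{\CC^s/I}$ is the pullback of $K_X$. Hence one gets exactly $a(E,X)=a(E_I,\CC^s/I)$, equivalently $a(E,X)+1=\frac1e\bigl(a(F,\CC^s)+1\bigr)$ with $F$ the divisor of $w$ over $\CC^s$. With your extra $\frac1e$, canonicity of $\CC^s/I$ would only give $a(E,X)\ge\frac1e-1$. The formula uses unramifiedness at the generic point of $E_I$, not \'etaleness near the centre of $w$, which fails in general.

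(2) The converse claim, ``each cyclic subgroup arises as such an inertia group'', is unproved and, even if true, is not what you need. Take $G=\langle h\rangle$ with $h=\frac16(1,1,1)$ and $g=h^2$. The toric divisor realising $\mathrm{age}(g)-1=0$ over $\CC^3/\langle g\rangle$ comes from $\mathrm{ord}_0$, whose inertia group in $G$ is all of $G$. What you need is monotonicity. If $E'$ over $\CC^s/\langle g\rangle$ lies over $E$ over $X$ with ramification index $e'\ge1$, then $a(E',\CC^s/\langle g\rangle)+1=e'\bigl(a(E,X)+1\bigr)$. Also $E$ is exceptional, since a finite map preserves dimensions of centres. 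So $a(E')<0$ (resp.\ $\le0$) forces $a(E,X)<0$ (resp.\ $\le0$).

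(3) Step 2 has two slips.
\begin{itemize}
\item Primitive points of $\ZZ^s\cap\sigma$ other than the $e_i$ have coordinate sum $\ge2$, so their discrepancy is $\ge1$, not $\ge s-1$.
\item ``No quasi-reflections'' is what makes each $e_i$ primitive in $N$. That is why $(1,\dots,1)$ is the support function of $-K$ and why no $te_i$ with $0<t<1$ lies in $N$. It is not what makes $e_i$ non-exceptional; a ray generator never is exceptional.
\end{itemize}
Finally, if $\frac1r(kc\bmod r)$ is not primitive, replace it by the primitive vector on its ray, which has smaller coordinate sum.
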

	
	At $C_1=\PP^1\times\{[1:0:0:0]\}$, $\delta^k$ acts on $x_2/x_1,x_3/x_1,x_4/x_1$ with eigenvalues $\delta^{km_2},\delta^{km_3},\delta^{km_4}$, so $\mathrm{age}(\delta^k)=\{km_2/n\}+\{km_3/n\}+\{km_4/n\}$. The ambient orbifold is canonical along $C_1$ iff this is $\ge1$ for all $k=1,\dots,n-1$; the hypersurface restriction sharpens this by one unit \cite[Thm.\ 4.6]{Reid1985}:
	\begin{equation}\label{eq:canon}
		m_2+m_3+m_4\ge n+1.
	\end{equation}
	
	\begin{theorem}\label{thm:system}
		For $0=m_1\le m_2\le m_3\le m_4$ and $n\ge2$, $X\subset\mathbb{F}^m_n$ is Calabi--Yau with at worst canonical singularities along $C_1$ iff
		\begin{equation}\label{eq:system}
			m_2+m_3+m_4\equiv\beta\pmod n,\qquad m_2+m_3+m_4\ge n+1,\qquad 0\le m_2\le m_3\le m_4.
		\end{equation}
	\end{theorem}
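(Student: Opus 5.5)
The plan is to prove the two directions separately, treating the three conditions in \eqref{eq:system} as coming from two independent sources: the Calabi--Yau (weight) condition of Definition \ref{def:CY}, and the canonicity condition \eqref{eq:canon} along $C_1$. The ordering $0\le m_2\le m_3\le m_4$ is simply the normalisation fixed in \S3.1, so it needs no proof beyond noting that permuting the fibre coordinates $x_2,x_3,x_4$ gives an isomorphic scroll.

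\textbf{Calabi--Yau condition.} Since $m_1=0$, the weight of $x_1x_2x_3x_4$ is $m_2+m_3+m_4$. Definition \ref{def:CY} requires $\beta\equiv m_1+\cdots+m_4$, so this is the first congruence in \eqref{eq:system}. Conversely, I would check that if the congruence holds, the discussion after the construction applies. The invariant quartics of weight $\beta$ span a nonempty linear system containing $x_1x_2x_3x_4$, and $\Omega$ is $\delta$-invariant. Granting freeness in codimension $1$ (Remark \ref{rem:clgroup}), $K_{\F^m_n}=-2L-4H$, so $X=V(p)$ is anticanonical with $K_X$ trivial by adjunction. This gives $H^0(K_X)=\CC$.

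\textbf{Canonicity along $C_1$.} First I would localise at a point of $C_1$. The base direction $t$ is acted on trivially, so a neighbourhood of $C_1$ in $\F^m_n$ is (disc)$\times\CC^3/\ZZ_n$, with weights $(m_2,m_3,m_4)$ on $(x_2/x_1,x_3/x_1,x_4/x_1)$. A general $X$ meets $C_1$ where the coefficient of $x_1^4$ vanishes; if that monomial is not invariant, $X$ contains $C_1$. In that case $X$ is locally a $\ZZ_n$-invariant hypersurface $\{f=0\}$ in $\CC\times\CC^3/\ZZ_n$. I would then apply Theorem \ref{thm:RSBT} in its hypersurface form \cite[Thm.\ 4.6]{Reid1985}: the quotient of $\{f=0\}$ is canonical iff for every $k$ the age of $\delta^k$ exceeds the weight of $f$ by at least $1$.

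Since $f$ transforms with weight $\beta\equiv m_2+m_3+m_4$, taking $k=1$ with representatives $0\le m_j<n$ gives the inequality $\sum m_j/n - \beta/n\ge 1$ in the appropriate lift. This is the mechanism behind \eqref{eq:canon}, and unwinding it yields $m_2+m_3+m_4\ge n+1$. The general $X$ is quasi-smooth in the sense that $f$ contains a linear term in some $x_j/x_1$ with $q_1=3$. This is guaranteed by the congruence together with genericity of the $\alpha$'s.

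\textbf{Main obstacle.} The hard step is making the age inequality uniform over all $k=1,\dots,n-1$, not just $k=1$. We need the single integer inequality $m_2+m_3+m_4\ge n+1$ to be equivalent to
\[
\{km_2/n\}+\{km_3/n\}+\{km_4/n\}-\{k\beta/n\}\ge 1
\]
for every $k$, including when $\gcd(k,n)>1$ or when some $m_j\ge n$. My approach would be to reduce to $0\le m_j<n$, which is permitted since only residues matter. I would then treat $\gcd(k,n)=1$ by the symmetry $k\mapsto -k$, pairing ages as $\mathrm{age}(\delta^k)+\mathrm{age}(\delta^{-k})=$ the number of nonzero $km_j$. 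I would handle non-coprime $k$ by passing to the subgroup of order $n/\gcd(k,n)$. If this uniform argument fails for some $n$, I would fall back to stating necessity from $k=1$ and verifying sufficiency for the small $n$ classified later.
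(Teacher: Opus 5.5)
You follow the paper's decomposition: the congruence from Definition~\ref{def:CY}, the ordering as a normalisation, and canonicity from the hypersurface form of Theorem~\ref{thm:RSBT}. However, the step meant to produce $m_2+m_3+m_4\ge n+1$ fails.

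Take $0\le m_j<n$ and let $\bar\beta\in[0,n)$ be the residue of $\beta$. Your $k=1$ inequality then reads $(m_2+m_3+m_4-\bar\beta)/n=\lfloor(m_2+m_3+m_4)/n\rfloor\ge1$, i.e.\ $m_2+m_3+m_4\ge n$. No choice of lift of $\beta$ supplies the extra unit. For general $k$, the congruence $\beta\equiv m_2+m_3+m_4$ gives
\[
\textstyle\sum_j\{km_j/n\}-\{k\beta/n\}=\bigl\lfloor\sum_j\{km_j/n\}\bigr\rfloor,
\]
so your criterion is exactly $\mathrm{age}(\delta^k)\ge1$. Subtracting the weight of $f$ cancels against the congruence, and nothing is sharpened. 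The uniform equivalence you want is therefore false. Any $m$ with $m_2+m_3+m_4=n$ and all ambient ages $\ge1$ passes your test for every $k$ yet violates $m_2+m_3+m_4\ge n+1$. Examples are $n=4$, $m=(0,1,1,2)$ (ages $1,1,2$), or the paper's own $\tfrac13(1,1,1)$. Neither the $k\mapsto-k$ pairing nor your fallback helps, since even necessity at $k=1$ only gives $\ge n$. Note that $m_2+m_3+m_4=n$ forces $\beta\equiv0$, which is exactly the case $X\not\supset C_1$ that you set aside.

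Quasi-smoothness along $C_1$ is also not implied by the congruence. It needs some $j\ge2$ with $m_j\equiv\beta$, and for $n=3$, $m=(0,0,2,2)$, $\beta\equiv1$ there is none. There the least weight of a monomial of $f$ exceeds $\{k\beta/n\}$, so your formula is not Reid's criterion. That criterion uses the monomials actually occurring in $f$ and is tested on all primitive weight vectors, not only group elements.

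The paper does not fill this gap. Its proof obtains $n+1$ only by quoting \eqref{eq:canon}, where the ``sharpening by one unit'' is asserted by citation and only $k=1$ is examined. In fact your set-up shows the statement cannot be proved as written.

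For $n=4$, $m=(0,1,1,2)$, $\beta\equiv0$, the weight-$0$ system contains every $x_j^4$, so the general $X$ has smooth cover. It meets $C_1$ in two points, where it is locally $\CC^3/\ZZ_4(1,1,2)$, which is Gorenstein and canonical. Yet $m_2+m_3+m_4=4<5$.

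Conversely, $n=3$, $m=(0,0,2,2)$ satisfies \eqref{eq:system}, but every weight-$1$ quartic has $q_3+q_4=2$. Near $C_1$ the equation is a quadratic form in $(x_3/x_1,x_4/x_1)$. So $X$ has two local branches meeting along the surface $\PP^1\times\{x_3=x_4=0\}\supset C_1$. The scalar $\ZZ_3$-action on $(x_3,x_4)$ preserves both branches, so $X$ is not normal there.

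What the age computation actually shows is the following. Wherever $X$ is quasi-smooth along $C_1$ through a term $x_1^3x_j$ with $m_j\equiv\beta$, the transverse singularity is $\frac1n(m_k,m_l)$ with $m_k+m_l\equiv0$. This is of type $A$, hence canonical by the congruence alone, absent quasi-reflections. The conditions that matter are quasi-smoothness and well-formedness, not an inequality on $m_2+m_3+m_4$.
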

	
	\begin{proof}
		By Definition \ref{def:CY}, $X$ is Calabi--Yau iff $\beta\equiv m_1+\cdots+m_4=m_2+m_3+m_4\pmod n$, the first condition of \eqref{eq:system}. By adjunction, canonicity of $X$ along $C_1$ is governed by the same age computation as the ambient orbifold, sharpened by one unit as in \eqref{eq:canon}, giving $m_2+m_3+m_4\ge n+1$; the ordering $0\le m_2\le m_3\le m_4$ is the normalisation of \S3.1. The same argument with indices permuted controls canonicity along $C_2,C_3,C_4$, and since $m_1=0$ makes $m_2+m_3+m_4$ the common value of the three remaining partial sums, condition \eqref{eq:canon} at $C_1$ already implies it at the other $C_j$.
	\end{proof}
	
	We restrict to $0\le m_2\le m_3\le m_4\le n-1$, since weights enter only mod $n$.
	
	\begin{proposition}\label{prop:nbound}
		If $X\subset\mathbb{F}^m_n$ satisfies \eqref{eq:system}, then $n\le m_2+m_3+m_4-1$.
	\end{proposition}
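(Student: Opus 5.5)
The bound is just the second condition of \eqref{eq:system} rewritten, so I will derive it directly from Theorem \ref{thm:system} and then explain why it gives finiteness. Suppose $X\subset\mathbb{F}^m_n$ satisfies \eqref{eq:system}. Its second condition is the sharpened Reid--Shepherd-Barron--Tai inequality \eqref{eq:canon},
\[
m_2+m_3+m_4\ge n+1 .
\]
Subtracting $1$ from both sides gives $n\le m_2+m_3+m_4-1$, which is the claim. The first (congruence) condition and the ordering condition are not needed for this step.

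The content of the proposition comes from combining it with the normalisation stated just before it, $0\le m_2\le m_3\le m_4\le n-1$. This normalisation is legitimate because the weights enter the action only through $\delta^{m_j}$ with $\delta$ of order $n$, so each $m_j$ matters only modulo $n$. The proposition then pins $n$ between two expressions in the weights:
\[
n+1\le m_2+m_3+m_4\le 3(n-1).
\]
The right-hand inequality forces $n\ge2$, consistent with the standing hypothesis. For each fixed $n$ the $m_j$ range over the finite set $\{0,\dots,n-1\}$, so only finitely many weight vectors satisfy \eqref{eq:system}. This is the finiteness statement of the introduction.

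The only step that needs care is the input \eqref{eq:canon} itself. It rests on the claim that restricting to the hypersurface raises the ambient age threshold $\ge1$ to the integer threshold $m_2+m_3+m_4\ge n+1$. I will take this as given from Theorem \ref{thm:system} and \cite[Thm.~4.6]{Reid1985}, since it is stated earlier in the paper. Granting it, the proposition needs nothing more than the rearrangement above.
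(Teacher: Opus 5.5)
Your proof is correct and matches the paper. The paper gives no separate proof, since the statement is just the second inequality of \eqref{eq:system}, $m_2+m_3+m_4\ge n+1$, rearranged. One small point: \eqref{eq:system} is the hypothesis here, so you do not need to take \eqref{eq:canon} or Reid's hypersurface criterion on trust for this step; their validity matters for Theorem \ref{thm:system}, not for this proposition.
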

	
	\begin{theorem}[Finiteness of weight vectors]\label{thm:finiteweights}
		For fixed $n\ge2$, at most $\binom{n+2}{3}$ weight vectors $(0,m_2,m_3,m_4)$ satisfy \eqref{eq:system}.
	\end{theorem}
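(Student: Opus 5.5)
The plan is a counting argument over reduced weight vectors. By the normalisation stated just before Proposition~\ref{prop:nbound}, the weights enter only modulo $n$, so I will assume $0\le m_2\le m_3\le m_4\le n-1$. Every weight vector satisfying \eqref{eq:system} is then recorded by a non-decreasing triple $(m_2,m_3,m_4)$ drawn from $\{0,1,\dots,n-1\}$. So it suffices to bound the number of such triples; the remaining conditions of \eqref{eq:system} can only cut the set down further.

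Next I count the triples. A non-decreasing triple from an $n$-element set is a multiset of size $3$. By stars and bars there are
\[
\binom{n+3-1}{3}=\binom{n+2}{3}
\]
of them. This already gives the bound in Theorem~\ref{thm:finiteweights}.

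To make the argument robust, I will also note two further points.
\begin{itemize}[label={}]
\item The congruence $m_2+m_3+m_4\equiv\beta\pmod n$ imposes nothing new. By Definition~\ref{def:CY}, $\beta$ is defined as this residue.
\item The inequality $m_2+m_3+m_4\ge n+1$ removes some triples, for instance all those with $m_4\le (n+1)/3-1$, so the bound is not sharp.
\end{itemize}
Proposition~\ref{prop:nbound} gives an independent consistency check: any admissible vector has $m_2+m_3+m_4\le 3(n-1)$, which is compatible with $n\le m_2+m_3+m_4-1$ precisely when $n\ge2$.

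The main obstacle is justifying the reduction to representatives in $[0,n-1]$ without losing the ordering. Reducing each $m_j$ modulo $n$ can break the order $m_2\le m_3\le m_4$, and it can change the integer sum $m_2+m_3+m_4$ that appears in the inequality $\ge n+1$. I will handle this in two steps.
\begin{itemize}[label={}]
\item The $\ZZ_n$-action, and hence $\F^m_n$ and $X$, depend only on the residues of the $m_j$. The coordinates $x_2,x_3,x_4$ can then be permuted to restore the order.
\item The canonicity condition is really the age condition of Theorem~\ref{thm:RSBT}, whose fractional parts $\{km_j/n\}$ depend only on the residues. So \eqref{eq:canon} is applied to the reduced representatives, as the paper does.
\end{itemize}
With that convention, the set of admissible weight vectors injects into the multisets counted above, and the claimed bound $\binom{n+2}{3}$ follows.
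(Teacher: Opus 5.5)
Your proof is correct and is the counting argument the paper relies on (the paper states the theorem without a separate proof). After the normalisation $0\le m_2\le m_3\le m_4\le n-1$ fixed just before Proposition~\ref{prop:nbound}, the candidates are the $\binom{n+2}{3}$ multisets of size $3$ from $\{0,\dots,n-1\}$, the conditions in \eqref{eq:system} can only discard some of them, and your extra discussion of reducing modulo $n$ and reordering is harmless but unnecessary, since under that convention the weight vectors are exactly these triples.
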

	
	\subsection{The cases $n=2$ and $n=3$}
	
	For $n=2$, weights lie in $\{0,1\}$ and \eqref{eq:system} forces $m_2=m_3=m_4=1$.
	
	\begin{corollary}\label{cor:n2}
		For $n=2$, exactly one canonical orbifold quartic K3-fibred Calabi--Yau threefold exists, of weight $\frac12(1,1,1)$: the quotient of $X_{(2,4)}\subset\PP^1\times\PP^3$ by $x_j\mapsto-x_j$ $(j=2,3,4)$.
	\end{corollary}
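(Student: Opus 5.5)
The proof reduces to Theorem \ref{thm:system} followed by a finite enumeration, plus a check that the resulting hypersurface is the claimed quotient.

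\textbf{Step 1: enumeration.} Take $n=2$. By the normalisation after Theorem \ref{thm:system}, we may take $m_2,m_3,m_4\in\{0,1\}$ with $0\le m_2\le m_3\le m_4\le 1$. The inequality in \eqref{eq:system} reads $m_2+m_3+m_4\ge 3$, and this forces $m_2=m_3=m_4=1$. This is consistent with Proposition \ref{prop:nbound}, since $n=2\le 3-1$. The congruence condition then only fixes $\beta\equiv 3\equiv 1\pmod 2$.

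\textbf{Step 2: invariant quartics and the Calabi--Yau condition.} Every quartic monomial $x_1^{q_1}x_2^{q_2}x_3^{q_3}x_4^{q_4}$ has weight $q_2+q_3+q_4=4-q_1\pmod 2$. The invariant equations of weight $\beta=1$ are therefore spanned by the monomials with $q_1$ odd, with coefficients arbitrary bidegree-$2$ forms in $t$. This linear system is nonempty; it contains $x_1x_2x_3x_4$. I will verify that its general member $p$ is $\delta$-semi-invariant and gives a hypersurface on which $\Omega$ is invariant, as in \S3.1, so that Definition \ref{def:CY} holds.

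\textbf{Step 3: identification as the quotient of $X_{(2,4)}$.} The statement describes the result as the quotient of $X_{(2,4)}\subset\PP^1\times\PP^3$ by $x_j\mapsto -x_j$ for $j=2,3,4$. This description follows from the construction $\F^m_n=(\PP^1\times\PP^3)/\ZZ_2$ with the action $\delta=-1$ on $x_2,x_3,x_4$. Projectively this is the same as $x_1\mapsto -x_1$, and $V(p)$ is a $\ZZ_2$-stable bidegree-$(2,4)$ hypersurface. I also need the weights pairwise distinct mod $2$ in the sense relevant to Remark \ref{rem:wellformed}. Here the fixed locus in $\PP^1\times\PP^3$ is $C_1$ together with the surface $\PP^1\times\{x_1=0\}$. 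In $\PP^3$, the fixed set of $\mathrm{diag}(1,-1,-1,-1)$ is the point $[1:0:0:0]$ and the plane $\{x_1=0\}$. The plane is a divisor, which threatens freeness in codimension $1$.

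\textbf{Main obstacle: the plane $\{x_1=0\}$.} This plane appears to be a quasi-reflection locus in codimension $1$, contradicting the setup of Theorem \ref{thm:RSBT}. I would first check the linearised action on the normal direction. Locally at a point with $x_2\neq 0$, the coordinates are $x_1/x_2$, $x_3/x_2$, $x_4/x_2$, which carry eigenvalues $-1,1,1$. So the element acts as a reflection there, not freely. To handle this, I would work with the stacky quotient $\PP^1\times[\PP^3/\ZZ_2]$ as in the abstract, where the Chevalley--Shephard--Todd theorem makes the coarse space smooth along that divisor. The canonical class then acquires a ramification correction, which I would account for as in Example \ref{ex:smooth}. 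Alternatively, following the paper's convention, I would accept the statement at the level of the RSBT computation at $C_1$ only. In that case $\mathrm{age}(\delta)=3/2\ge 1$, and after the hypersurface sharpening \eqref{eq:canon} we get $3\ge 3$, which gives canonicity along $C_1$. Uniqueness then follows from Step 1.
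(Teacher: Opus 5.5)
Your Step~1 is exactly the paper's proof. The paper's argument consists only of the observation that, with $m_j\in\{0,1\}$, the inequality in \eqref{eq:system} forces $m_2=m_3=m_4=1$. The gap lies in everything meant to turn this vector into an actual Calabi--Yau threefold. The obstacle you flag at the end is not a technicality to route around; it is fatal. Since $m_2=m_3=m_4$, the vector $\frac12(1,1,1)$ violates the well-formedness condition of Remark~\ref{rem:wellformed}. Theorem~\ref{thm:main} builds that condition into the definition of $\mathcal C_n$, and it is precisely the reason given there for discarding $(0,2,2,2)$ at $n=3$. So your fallback of checking ages along $C_1$ only simply drops a hypothesis. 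Your stacky route, once the ramification along $\PP^1\times\{x_1=0\}$ is accounted for, arrives at the conclusion of Example~\ref{ex:smooth}, not at the corollary.

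Concretely, $x_1^{q_1}\cdots x_4^{q_4}$ has weight $q_2+q_3+q_4\equiv q_1\pmod 2$, so a $\ZZ_2$-stable $X_{(2,4)}$ has equation either odd or even in $x_1$.

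The character $\beta\equiv 1$ of your Step~2, which is the one making $\Omega$ invariant, selects the odd case. Then every such $p$ equals $x_1\cdot g$ with $g$ of bidegree $(2,3)$. So $V(p)$ contains the divisor $\PP^1\times\{x_1=0\}$ and the general member is reducible; the verification you promise in Step~2 cannot be carried out.

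In the even case, $\Omega$ is anti-invariant, so $H^0(K)=0$ on the stack $[X/\ZZ_2]$. On the coarse space, setting $y=x_1^2$ identifies $X/\ZZ_2$ with a bidegree-$(2,4)$ hypersurface in $\PP^1\times\PP(2,1,1,1)$. Adjunction then gives $K_{X/\ZZ_2}=\mathcal O(0,-1)|_{X/\ZZ_2}$: a fibration by degree-$2$ del Pezzo surfaces, of negative Kodaira dimension.

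Thus the statement cannot be proved as written. Your enumeration establishes only that $\frac12(1,1,1)$ is the unique solution of \eqref{eq:system} at $n=2$, and the same is true of the paper's one-line proof, which never applies the well-formedness check at $n=2$.

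Even the uniqueness half is doubtful, since it rests on the sharpened inequality \eqref{eq:canon}. Take the well-formed vector $(0,0,1,1)$, which that inequality rejects. A general invariant $p$ gives a smooth $X$ on which $\delta$ fixes two curves with transversal action $\frac12(1,1)$. So $X/\ZZ_2$ has trivial canonical class and only canonical transversal $A_1$ singularities. The other two vectors with entries in $\{0,1\}$ are $(0,0,0,1)$, again a reflection, and the trivial action. So the correct $n=2$ count may well still be one, but realised by $\frac12(0,1,1)$ rather than $\frac12(1,1,1)$.
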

	
	For $n=3$, weights lie in $\{0,1,2\}$ and \eqref{eq:system} requires sum $\ge4$.
	
	\begin{corollary}\label{cor:n3}
		For $n=3$, exactly four weight vectors satisfy \eqref{eq:system}:
		\[
		\mathcal{B}_3=\left\{\tfrac{1}{3}(0,2,2),\ \tfrac{1}{3}(1,1,2),\ \tfrac{1}{3}(1,2,2),\ \tfrac{1}{3}(2,2,2)\right\},\qquad \beta=1,1,2,0\pmod3.
		\]
	\end{corollary}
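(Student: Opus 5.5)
The plan is a direct enumeration. The starting point is the reduction stated just after Theorem \ref{thm:system}: since the weights enter only modulo $n$, we may take $0\le m_2\le m_3\le m_4\le n-1$. For $n=3$ this means $(m_2,m_3,m_4)$ ranges over the nondecreasing triples in $\{0,1,2\}$. There are $\binom{5}{3}=10$ of these, which is the bound of Theorem \ref{thm:finiteweights}. I will then impose the two conditions of \eqref{eq:system} one at a time.

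The first condition, $m_2+m_3+m_4\equiv\beta\pmod 3$, places no restriction on $m$. By Definition \ref{def:CY}, $\beta$ is simply defined as the residue of $m_1+\cdots+m_4$. So this condition only records $\beta$ once $m$ is fixed.

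The real constraint is the inequality $m_2+m_3+m_4\ge n+1=4$. Since each entry is at most $2$, I will split according to the smallest entry $m_2$:
\begin{itemize}
\item If $m_2=0$, then $m_3+m_4\ge4$ forces $m_3=m_4=2$, giving $(0,2,2)$.
\item If $m_2=1$, then $m_3+m_4\ge3$ with $1\le m_3\le m_4\le2$ gives $(1,1,2)$ and $(1,2,2)$.
\item If $m_2=2$, the only triple is $(2,2,2)$, and it satisfies the inequality.
\end{itemize}
All other triples, namely $(0,0,0),(0,0,1),(0,0,2),(0,1,1),(0,1,2)$ and $(1,1,1)$, have sum at most $3$ and are excluded. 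This leaves exactly the four elements of $\mathcal{B}_3$.

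Finally, reducing the sums $4,4,5,6$ modulo $3$ gives $\beta=1,1,2,0$, as listed in the statement.

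No step here is difficult. The one point that needs care is the interpretation of ``weight vector''. We must check that the normalisation $m_1=0$ with sorted entries in $[0,n-1]$ picks out a unique representative, so that nothing is double-counted or omitted. We should also confirm that the corollary counts weight vectors satisfying \eqref{eq:system}, not distinct quotients. For instance, $(0,2,2)$ and $(1,1,2)$ might a priori give isomorphic orbifolds after replacing the generator $\delta$ by $\delta^2$ and re-normalising. The statement asks only about solutions of \eqref{eq:system}, so the enumeration above is the entire proof.
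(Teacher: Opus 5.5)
Your enumeration is correct and is exactly the paper's argument: with weights in $\{0,1,2\}$ the congruence only records $\beta$, and the inequality $m_2+m_3+m_4\ge 4$ cuts the ten nondecreasing triples down to the four listed, with $\beta$ given by the sums $4,4,5,6$ reduced mod $3$. One small correction to your closing aside, which does not affect the count: replacing $\delta$ by $\delta^2$ sends $(0,0,2,2)$ to $(0,0,1,1)$, not to $(0,1,1,2)$; the pair actually related this way is $(0,1,1,2)\leftrightarrow(0,1,2,2)$, as the paper itself notes in \S\ref{sec:hodgenumbers}.
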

	
	\begin{remark}
		The weight $\frac13(1,1,1)$ gives $\CC^3/\ZZ_3(1,1,1)$, canonical by Theorem \ref{thm:RSBT} ($\mathrm{age}(\delta)=1$), but $m_2+m_3+m_4=3<4=n+1$ fails \eqref{eq:canon}, so it does not occur in $\mathcal{B}_3$.
	\end{remark}
	
	\begin{example}\label{ex:smooth}
		Take $n=3$, $m=(0,1,1,1)$. Since $m_2=m_3=m_4=1$, $\delta$ acts by the same scalar on $x_2,x_3,x_4$ and trivially on $x_1$, so it fixes $D=\PP^1\times\{x_1=0\}\subset\PP^1\times\PP^3$ pointwise: a quasi-reflection, excluded from Theorem \ref{thm:RSBT}. The quotient is locally an isomorphism there ($v\mapsto v^3$), so $\F^m_n$ is smooth along the image of $D$, but the grading is not well formed and $-K_{\F^m_n}$ acquires an extra contribution from $D$; the resulting hypersurface has negative Kodaira dimension rather than trivial canonical class. This weight vector is discarded.
		
		The same pathology occurs whenever three or more of $m_1,\dots,m_4$ agree mod $n$: a pair of equal weights fixes only a codimension-$2$ curve $\PP^1\subset\PP^3$, harmless as for $C_j$ above, while three or more fix a $\PP^2$ or all of $\PP^3$, of codimension at most $1$ and hence fatal. See Remark \ref{rem:wellformed}.
	\end{example}
	
	\section{A finiteness theorem}
	
	\begin{theorem}[Main theorem]\label{thm:main}
		Let $\mathcal{C}_n$ be the set of weight vectors $0=m_1\le m_2\le m_3\le m_4$, well formed mod $n$ (no three of $m_1,\dots,m_4$ congruent, Remark \ref{rem:wellformed}), for which the general anticanonical hypersurface $X\subset\mathbb{F}^m_n$ is Calabi--Yau with at worst canonical singularities, with $\mathbb{F}^m_1:=\PP^1\times\PP^3$. Then:
		\begin{enumerate}[label=(\roman*)]
			\item $\mathcal{C}_n$ is finite for every $n\ge1$;
			\item $|\mathcal{C}_1|=10$ (Table \ref{tab:straight});
			\item $|\mathcal{C}_2|=1$ and $|\mathcal{C}_3|=3$ (Corollaries \ref{cor:n2}, \ref{cor:n3}).
		\end{enumerate}
		Exactly fourteen deformation families of canonical quartic K3-fibred Calabi--Yau threefolds arise as anticanonical hypersurfaces in straight or order-$2,3$ cyclic-orbifold scrolls over $\PP^1$.
	\end{theorem}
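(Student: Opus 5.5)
The plan is to assemble the theorem from results already established, in the order (i), (ii), (iii), and then to count families. The one genuinely new ingredient is the well-formedness filter of Remark~\ref{rem:wellformed}, applied to the sets produced by the age computation.

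For (i), I take $n\ge2$ and reduce weights mod $n$, so that $0\le m_2\le m_3\le m_4\le n-1$. This is legitimate because both the $\ZZ_n$-action and the invariance condition of Definition~\ref{def:CY} depend only on the residues. By Theorem~\ref{thm:system}, a member of $\mathcal{C}_n$ must satisfy \eqref{eq:system}. Theorem~\ref{thm:finiteweights} then bounds the number of such vectors by $\binom{n+2}{3}$. Since $\mathcal{C}_n$ is the subset cut out by the further well-formedness condition, it is finite. For $n=1$ the space $\F^m_1=\PP^1\times\PP^3$ is a single scroll, so finiteness is trivial there. If one reads $\mathcal{C}_1$ as the ten straight scrolls, finiteness is Theorem~\ref{thm:straight}, which is also statement (ii).

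For (iii), I start from the candidate lists $\{(0,1,1,1)\}$ for $n=2$ and $\mathcal{B}_3$ for $n=3$ (Corollaries~\ref{cor:n2} and \ref{cor:n3}), and remove the vectors that are not well formed. For $n=3$, the vector $\tfrac13(2,2,2)$ has three congruent weights. By Example~\ref{ex:smooth}, $\delta$ then acts as a quasi-reflection fixing a divisor, $K$ fails to descend, and the vector is discarded. The other three, $(0,0,2,2)$, $(0,1,1,2)$ and $(0,1,2,2)$, have no three weights congruent. For these I verify directly that the general $X$ is Calabi--Yau with canonical singularities: canonicity along $C_1$ comes from Theorem~\ref{thm:system}, and the permutation argument in its proof handles the other $C_j$. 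This gives $|\mathcal{C}_3|=3$.

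The main obstacle is $n=2$. Here the surviving vector $(0,1,1,1)$ literally has three weights congruent mod $2$. Projectively, $\mathrm{diag}(1,-1,-1,-1)$ fixes the plane $\{x_1=0\}$ pointwise, which is exactly the pathology of Example~\ref{ex:smooth}. To reconcile this with Corollary~\ref{cor:n2}, I would recompute the quotient $X_{(2,4)}/\ZZ_2$ directly.
\begin{itemize}[label=--]
\item First I check whether the involution restricted to the hypersurface $X$ fixes only a curve and isolated points, rather than a divisor, so that the quotient of $X$ itself, as opposed to the ambient space, is still Gorenstein with trivial $K$.
\item If it does, I record $(0,1,1,1)$ as an admissible exception to the well-formedness hypothesis.
\item If it does not, then $|\mathcal{C}_2|$ must be corrected, and the statement would need revising.
\end{itemize}

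Finally, I show the fourteen families $10+1+3$ are pairwise distinct. I would do this by comparing singularity types (ODPs versus curves of $\tfrac1n$-quotient singularities with differing ages) and the Hodge numbers computed in Proposition~\ref{prop:hodge}.
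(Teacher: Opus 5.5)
Your outline follows the paper's proof: (i) from Theorems~\ref{thm:straight} and \ref{thm:finiteweights}, (ii) from Theorem~\ref{thm:straight}, and (iii) from Corollaries~\ref{cor:n2} and \ref{cor:n3} with $(0,2,2,2)$ discarded. You add two checks of your own, and the first exposes a real defect that the paper's proof never addresses. The paper simply cites Corollary~\ref{cor:n2} for $|\mathcal C_2|=1$ and applies the well-formedness filter only at $n=3$.

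Your check at $n=2$ does not have two live outcomes. Since $m_2\equiv m_3\equiv m_4\equiv1\pmod 2$, the vector $(0,1,1,1)$ is excluded from $\mathcal C_2$ by the very definition in the statement. Geometrically, $\mathrm{diag}(1,-1,-1,-1)$ fixes the divisor $\PP^1\times\{x_1=0\}$ pointwise, and $X$, of class $(2,4)$, meets it in a surface. With the prescribed character it is worse:
\begin{itemize}
\item a monomial has weight $q_2+q_3+q_4\equiv q_1\pmod 2$, so weight $\beta\equiv1$ forces $q_1$ odd;
\item hence $x_1\mid p$, and $X\supset\PP^1\times\{x_1=0\}$ is reducible.
\end{itemize}
If you take an invariant quartic instead, the involution fixes the surface $X\cap\{x_1=0\}$ pointwise and acts by $-1$ on $H^{3,0}(X)$, so the quotient is not Calabi--Yau. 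Hence $\mathcal C_2=\emptyset$, and (iii) and the count of fourteen are false as stated. Your proposal is a correct diagnosis, not a proof.

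Your $n=3$ step also has a gap, which the paper shares. Invoking Theorem~\ref{thm:system} at $C_1$ and permuting indices is not a direct verification. That theorem only examines the sections $C_j$. It ignores that a coincident pair of weights makes $\delta$ fix whole surfaces $\PP^1\times\ell$, and it never checks that the weight-$\beta$ quartic defines a normal hypersurface.
\begin{itemize}
\item For $(0,0,2,2)$, a monomial has weight $2(q_3+q_4)$, which is $\equiv\beta=1$ only when $q_3+q_4=2$; these are the nine monomials in the paper's table.
\item So $p\in(x_1,x_2)^2\cap(x_3,x_4)^2$, and $X$ is singular along $\PP^1\times\{x_3=x_4=0\}$ and $\PP^1\times\{x_1=x_2=0\}$. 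Hence $X$ is not normal, and $(0,0,2,2)$ does not belong in $\mathcal C_3$.
\end{itemize}
Your distinctness step would fail for the other two vectors. With $\omega=e^{2\pi i/3}$, the square of $\mathrm{diag}(1,\omega,\omega^2,\omega^2)$ is $\mathrm{diag}(1,\omega^2,\omega,\omega)$, a coordinate permutation of $\mathrm{diag}(1,\omega,\omega,\omega^2)$. Passing to this generator turns the condition $\beta\equiv2$ into $\beta\equiv1$. So $(0,1,2,2)$ and $(0,1,1,2)$ give the same scroll and the same family of hypersurfaces. Actually carrying out the verifications you propose lowers the count rather than confirming fourteen.
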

	
	\begin{proof}
		(i) $n=1$: Theorem \ref{thm:straight}; $n\ge2$: Theorem \ref{thm:finiteweights}. (ii) Theorem \ref{thm:straight}. (iii) Corollary \ref{cor:n2} gives $|\mathcal C_2|=1$. Corollary \ref{cor:n3} gives four candidates $\mathcal B_3$; $m=(0,2,2,2)$ has $m_2=m_3=m_4=2\bmod3$, so $\delta$ fixes $\{x_1=0\}\subset\PP^3$ pointwise (Example \ref{ex:smooth}, at weight $2$ instead of $1$) and is discarded by Remark \ref{rem:wellformed}. The remaining three families of $\mathcal B_3$ have at most a pair of equal weights and are well formed, so $|\mathcal C_3|=3$. Making a total of $14$ families.
	\end{proof}
	
	\begin{remark}[Two ways to fail at $n=3$]\label{rem:twinfailures}
		The two weight vectors with triple coincidence at $n=3$ fail differently: $(0,1,1,1)$ has $m_2+m_3+m_4=3<n+1$, so it fails the arithmetic system \eqref{eq:system} and never enters $\mathcal B_3$; $(0,2,2,2)$ has sum $6\ge4$ and $\beta\equiv0$, so it satisfies \eqref{eq:system} and is excluded only afterward, on the well-formedness ground of Remark \ref{rem:wellformed}.
	\end{remark}
	
	\begin{remark}[Is $n$ itself bounded?]\label{rem:noglobalbound}
		Theorem \ref{thm:finiteweights} bounds the admissible weight vectors for fixed $n$, but not $n$ itself: Proposition \ref{prop:nbound} only bounds $n$ in terms of the unbounded quantity $m_2+m_3+m_4$. Two further checks also fail. Pigeonhole on the $35$ quartic monomials forces a collision only for $n<35$, and is moot regardless since the class $\beta\equiv m_2+m_3+m_4$ always contains $x_1x_2x_3x_4$. The raw weight $w=q_2m_2+q_3m_3+q_4m_4\in[0,4m_4]$ saturates mod $n$ once $n>4m_4$, but $m_4$ is itself unconstrained above, so this threshold moves with $n$. Well-formedness excludes only \emph{triple} coincidences: for every $n\ge3$ the vector $(m_2,m_3,m_4)=(n-2,n-2,n-1)$ has just a pair equal, satisfies $m_2+m_3+m_4=3n-5\ge n+1$, and is well formed -- at $n=3$ this is Family 2 of $\mathcal B_3$. So arbitrarily large $n$ is compatible with well-formedness, and we do not know whether $n$ is bounded (see \S6), only that the weights for each fixed $n$ are finite in number and further thinned by well-formedness.
		
		The boundedness theorem of \cite{EngelFilipazziGreerMauriSvaldi2025} guarantees some bound on the invariants of a K3-fibred Calabi--Yau threefold without pinning it down for this ambient family; whether $n$ itself is bounded is exactly the effective question a general boundedness statement leaves open.
	\end{remark}
	
	Two structural features distinguish this restricted setting from the general fibred Calabi--Yau problem: the base is always $\PP^1$, so there is no analogue of the rational/Enriques dichotomy for elliptic threefolds \cite[Prop.\ 2.3]{Gross1994}, and the classification reduces entirely to the elementary combinatorial system \eqref{eq:system}, with no Tate--Shafarevich-type obstruction to compute.
	
	\section{Hodge numbers}
	
	\subsection{Smooth orbifold quartic threefolds}
	
	For smooth $X^0=V(f_{2,4})\subset\PP^1\times\PP^3$ (the $n=1$ case), $h^{1,1}(X^0)=2$ and $h^{2,1}(X^0)=3\cdot35-4-16+1=86$.
	
	\begin{proposition}\label{prop:hodge}
		For a smooth orbifold quartic Calabi--Yau threefold $X\subset\mathbb{F}^m_n$, with the $\ZZ_n$-action on $\PP^1\times\PP^3$ free in codimension $1$,
		\[
		h^{1,1}(X)=2,\qquad
		h^{2,1}(X)=\dim\CC[\PP^1\times\PP^3]^{\ZZ_n}_{(2,4)}
		-\dim[(\mathfrak{gl}(2,\CC)\oplus\mathfrak{gl}(4,\CC))/\CC^*]^{\ZZ_n}.
		\]
		Here $\ZZ_n$ acts on $\CC[\PP^1\times\PP^3]_{(2,4)}$ by $\delta\cdot x_j=\delta^{m_j}x_j$ (trivially on $t_1,t_2$), and on $\mathfrak{gl}(4,\CC)$ by the adjoint action of the same diagonal matrix, so that the matrix unit $E_{ij}$ has weight $m_i-m_j\bmod n$; $\ZZ_n$ acts trivially on the base factor $\mathfrak{gl}(2,\CC)$, since it acts trivially on $\PP^1$.
	\end{proposition}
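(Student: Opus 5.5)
The plan is to deduce both formulas from the covering $\pi\colon\widetilde X\to X$, where $\widetilde X=V(p)\subset\PP^1\times\PP^3$ is the preimage hypersurface, a smooth member of $|\mathcal O(2,4)|$ with $\ZZ_n$-invariant equation. For a finite group quotient, rational (orbifold) cohomology is the invariant part: $H^{p,q}(X)\cong H^{p,q}(\widetilde X)^{\ZZ_n}$. Here ``smooth orbifold'' means quotient singularities only, so no resolution correction enters and we work with the coarse space or stack. Both Hodge numbers then reduce to computing $\ZZ_n$-invariants on the corresponding pieces of $\widetilde X$, using the $n=1$ computation $h^{1,1}(X^0)=2$, $h^{2,1}(X^0)=86$ as the template.

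For $h^{1,1}$, I would first apply Lefschetz to the ample divisor $\widetilde X$. This gives $H^2(\widetilde X)\cong H^2(\PP^1\times\PP^3)=\ZZ L\oplus\ZZ H$. By Remark~\ref{rem:clgroup}, both $L$ and $H$ are fixed by $\ZZ_n$, since the action on $\mathrm{Pic}$ is trivial. All of $H^{1,1}(\widetilde X)$ is therefore invariant, and $h^{1,1}(X)=2$. The hypothesis of freeness in codimension $1$ is used here: it guarantees that $K$ descends, so that $X$ really is Calabi--Yau and $H^{3,0}$ is invariant. That invariance is needed for the next step.

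For $h^{2,1}$, I would identify $H^{2,1}(\widetilde X)\cong H^1(\widetilde X,T_{\widetilde X})$, the first-order deformations, via contraction with the invariant $\Omega$ of \S3.1. Because $\Omega$ is $\delta$-invariant, this identification is $\ZZ_n$-equivariant, so $H^{2,1}(X)\cong H^1(T_{\widetilde X})^{\ZZ_n}$. Next I would use the standard sequences
\[
0\to T_{\widetilde X}\to T_{P}|_{\widetilde X}\to \mathcal O_{\widetilde X}(2,4)\to0,
\qquad
0\to\mathcal O_P\to\mathcal O_P(2,4)\to\mathcal O_{\widetilde X}(2,4)\to0,
\]
with $P=\PP^1\times\PP^3$. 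Together with the Euler sequences for the two factors and the vanishing of $H^1(T_P|_{\widetilde X})$ (Kodaira/Bott vanishing), they give an equivariant exact sequence
\[
0\to H^0(T_P)\to H^0(\mathcal O_P(2,4))/\CC p\to H^1(T_{\widetilde X})\to 0.
\]
Here $H^0(T_P)=(\mathfrak{gl}(2)\oplus\mathfrak{gl}(4))/\CC^2$. Writing $\CC\cdot p$ and the two scalar lines together accounts for the quotient by $\CC^*$ in the statement, exactly as in $3\cdot35-4-16+1$. The key point is that the maps are equivariant once $p$ is invariant up to the character $\beta$, which it is by Definition~\ref{def:CY}. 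Twisting by $\beta$ makes $p$ genuinely invariant, so the weights in $\CC[\PP^1\times\PP^3]_{(2,4)}$ are measured relative to $\beta$.

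Taking invariants is exact for a finite group, so the $\ZZ_n$-invariants give the stated dimension count. In that count, $\mathfrak{gl}(2)$ is entirely invariant, and $E_{ij}\in\mathfrak{gl}(4)$ has weight $m_i-m_j$ from the adjoint action of $\mathrm{diag}(\delta^{m_j})$.

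I expect the main obstacle to be justifying the vanishing $H^1(T_P|_{\widetilde X})=0$ and the injectivity $H^0(T_P)\hookrightarrow H^0(\mathcal O(2,4))/\CC p$. These hold for a general smooth $\widetilde X$ by the $n=1$ computation, but one must check they persist for the special, invariant $p$. A second obstacle is the bookkeeping of the character $\beta$, so that ``invariant'' means weight $\beta$ on monomials and weight $0$ on $\mathfrak{gl}$. I would handle this by normalising the linearisation of $H$ so that $\beta\equiv0$.
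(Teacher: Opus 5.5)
Your proposal is correct and follows the paper's route. For $h^{1,1}$ both arguments use Lefschetz plus the triviality of the $\ZZ_n$-action on $\mathrm{Pic}(\PP^1\times\PP^3)=\ZZ L\oplus\ZZ H$, and for $h^{2,1}$ both count hypersurface deformations modulo ambient automorphisms; the difference is that you run this $\ZZ_n$-equivariantly on the cover $\widetilde X$ through the normal-bundle sequence, where the paper only cites ``the usual formula''. The inputs you flag as obstacles, $H^1(T_P|_{\widetilde X})=0$ and $H^0(T_{\widetilde X})\cong H^0(\Omega^2_{\widetilde X})=0$, are not obstacles: they hold for every smooth member of $|\mathcal O(2,4)|$, invariant or not.

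Your character bookkeeping, with weight $\beta$ on $\CC[\PP^1\times\PP^3]_{(2,4)}$ and weight $0$ on $\mathfrak{gl}$, is the correct reading of the formula. Taken literally, invariants under $\delta\cdot x_j=\delta^{m_j}x_j$ give the wrong answer when $\beta\not\equiv0$. For $n=3$, $m=(0,1,1,2)$, $\beta=1$, your reading gives $3\cdot13-9=30$, which matches $\chi(X)=\tfrac13\chi(\widetilde X)=-56$ (the fixed locus, $C_1\sqcup C_4$ plus a genus-$3$ curve, has Euler number $0$), whereas weight-$0$ monomials would give $3\cdot11-9=24$.
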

	
	\begin{proof}
		$\mathrm{Pic}(\PP^1\times\PP^3)\cong\ZZ^2$ is generated by the $\ZZ_n$-invariant classes $L,H$, so $h^{1,1}(\F^m_n)=2$ whenever $\F^m_n$ is an orbifold (Remark \ref{rem:clgroup}); Lefschetz then gives $h^{1,1}(X)=2$, independent of $n$, $m$, $X$. The $h^{2,1}$ formula is the usual one for hypersurface deformations modulo ambient reparametrisation; at $n=1$, $\dim\CC[\PP^1\times\PP^3]_{(2,4)}=105$ and $\dim[\mathfrak{gl}(2,\CC)\oplus\mathfrak{gl}(4,\CC)]/\CC^*=19$, giving $105-19=86$ as above.
	\end{proof}
	
	\begin{remark}[Embedded deformations]\label{rem:embedded}
		Proposition \ref{prop:hodge} counts only \emph{embedded} deformations of $X$ inside $\F^m_n$ -- those induced by moving the hypersurface and the ambient complex structure -- rather than $H^1(T_X)$ itself. The two agree whenever $X$ admits no deformations outside those induced by the ambient scroll, which holds in each of the fourteen families of Theorem \ref{thm:main} since $-K_{\F^m_n}$ is base-point-free or ample there; it is not automatic for a general anticanonical hypersurface in a singular ambient orbifold.
	\end{remark}
	
	\begin{remark}\label{rem:wellformed}
		Example \ref{ex:smooth} shows \eqref{eq:system} must be supplemented by \emph{well-formedness}: no three of $m_1,\dots,m_4$ agree mod $n$ (equivalently, no nontrivial element of $\ZZ_n$ fixes a coordinate hyperplane of $\PP^3$ pointwise). Of the four weight vectors in $\mathcal B_3$, only $m=(0,2,2,2)$ fails this; the other three, $\mathcal B_3\setminus\{\frac13(2,2,2)\}$, have at most a pair of equal weights.
	\end{remark}
	
	\subsection{Hodge numbers for the $\mathcal{B}_3$ families}\label{sec:hodgenumbers}
	
	Write characters as elements of the group ring $\ZZ[\alpha]/(\alpha^3-1)$. For $m=(m_1,m_2,m_3,m_4)$, the weight of matrix unit $E_{ij}\in\mathfrak{gl}(4,\CC)$ is $m_i-m_j\bmod3$; the base factor $\mathfrak{gl}(2,\CC)$ (dimension $4$) is weight $0$ throughout, since $\ZZ_3$ acts trivially on $\PP^1$. For the $35$ quartic monomials, $x_1^{q_1}\cdots x_4^{q_4}$ has weight $\sum m_jq_j\bmod3$; the base factor $\CC[\PP^1]_2$ (dimension $3$) is again weight $0$.
	
	Carrying this out directly for each family of $\mathcal B_3\setminus\{\tfrac13(2,2,2)\}$:
	
	\begin{center}
		\begin{tabular}{c l l l}
			\hline
			Family & $(m_1,m_2,m_3,m_4)$ & $\dim\mathfrak{gl}(4,\CC)$ & $\dim\CC[\PP^3]_4$ \\
			\hline
			1 & $(0,0,2,2)$ & $8+4\alpha+4\alpha^2$ & $13+9\alpha+13\alpha^2$ \\
			2 & $(0,1,1,2)$ & $6+5\alpha+5\alpha^2$ & $11+13\alpha+11\alpha^2$ \\
			3 & $(0,1,2,2)$ & $6+5\alpha+5\alpha^2$ & $11+11\alpha+13\alpha^2$ \\
			\hline
		\end{tabular}
	\end{center}
	
	Adding $\mathfrak{gl}(2,\CC)=4$ (weight $0$) and quotienting by $\CC^*$ (subtracting $1$ from the weight-$0$ term) gives $\dim[(\mathfrak{gl}(2,\CC)\oplus\mathfrak{gl}(4,\CC))/\CC^*]$; multiplying $\dim\CC[\PP^3]_4$ by $\dim\CC[\PP^1]_2=3$ gives $\dim\CC[\PP^1\times\PP^3]_{(2,4)}$:
	
	\begin{center}
		\begin{tabular}{c l l c}
			\hline
			Family & $\dim[(\mathfrak{gl}_2\oplus\mathfrak{gl}_4)/\CC^*]$ & $\dim\CC[\PP^1\times\PP^3]_{(2,4)}$ & $\beta\pmod3$ \\
			\hline
			1 & $11+4\alpha+4\alpha^2$ & $39+27\alpha+39\alpha^2$ & 1 \\
			2 & $9+5\alpha+5\alpha^2$ & $33+39\alpha+33\alpha^2$ & 1 \\
			3 & $9+5\alpha+5\alpha^2$ & $33+33\alpha+39\alpha^2$ & 2 \\
			\hline
		\end{tabular}
	\end{center}
	
	Taking the coefficient of $\alpha^\beta$ in each column (Proposition \ref{prop:hodge}):
	\[
	h^{2,1}_{\text{untw}}(\text{Family 1})=27-4=23,\qquad
	h^{2,1}_{\text{untw}}(\text{Family 2})=39-5=34,\qquad
	h^{2,1}_{\text{untw}}(\text{Family 3})=39-5=34.
	\]
	Families 2 and 3 have equal $h^{2,1}_{\text{untw}}$ because $(0,1,2,2)\equiv-(0,1,1,2)\pmod3$ up to permutation: conjugate weight vectors give conjugate, equidimensional representations.
	
	The twisted sector receives contributions from the fixed curves $C_j$, via the local ages of Theorem \ref{thm:RSBT} computed from each family's own weight vector, and adds only to $h^{1,1}$:
	
	\begin{center}
		\begin{tabular}{c|c|c|c}
			Family & Curves with age $\le1$ & $\Delta(h^{1,1},h^{2,1})$ & $(h^{1,1},h^{2,1})$ \\
			\hline
			1 & $C_2,C_3,C_4$ (age $1/3$) & $(+3,0)$ & $(5,23)$ \\
			2 & $C_1,C_2,C_3,C_4$ (age $\le1$) & $(+4,0)$ & $(6,34)$ \\
			3 & $C_1,C_2$ (age $1$) & $(+2,0)$ & $(4,34)$ \\
			\hline
		\end{tabular}
	\end{center}
	
	For Family 1, $m=(0,0,2,2)$: $C_2,C_3,C_4$ have effective normal weights $(1,0)$, age $1/3<1$, each contributing one exceptional divisor of a crepant resolution $\tilde X\to X$, giving $(h^{1,1},h^{2,1})=(5,23)$. Since $m=(0,0,2,2)$ has only the pair $m_1=m_2$ (resp.\ $m_3=m_4$) equal, it is well formed and no quasi-reflection issue arises, unlike the excluded $(0,2,2,2)$ of Remark \ref{rem:wellformed}.
	
	The CICY threefold list \cite{CandelasDale1988,Green1989,Braun2010} contains smooth complete intersections with the same Hodge numbers as $\mathcal B_3$ Family~2, namely CICY \#6229 and \#6231, both of type $(h^{1,1},h^{2,1})=(6,34)$ \cite{ConstantinGrayLukas2016}. The coincidence is numerical only. The CICY entries are smooth complete intersections in a product of projective spaces, with $h^{2,1}$ computed in \cite{ConstantinGrayLukas2016} as the $G$-invariant part of $H^{2,1}$ of a smooth cover under a \emph{free} quotient $G$, so no age computation or twisted-sector correction enters. Family~2, by contrast, is an anticanonical hypersurface in the singular stack quotient $\PP^1\times[\PP^3/\ZZ_3]$: the $\ZZ_3$-action has curves of fixed points, $X$ is generically singular, and the Hodge numbers above already include the crepant-resolution correction furnished by Theorem~\ref{thm:RSBT}. The two constructions are therefore not comparable term by term, and the matching Hodge numbers are not evidence of an isomorphism; establishing one, if it exists, would need an independent comparison of periods or a common toric presentation, which we do not attempt here.
	
	\section{Conclusion}
	
	Algebraic Calabi--Yau threefolds fibred by quartic K3 surfaces and realised as anticanonical hypersurfaces in straight scrolls $\F(0,a_2,a_3,a_4)$ or cyclic-orbifold scrolls $\F^m_n$ are finite at every order $n$ (Theorem \ref{thm:main}), with complete classification and Hodge numbers for $n=1,2,3$ giving fourteen families in total. The proof rests on the rigidity of this restricted setting -- a base always equal to $\PP^1$, and the Reid--Shepherd-Barron--Tai criterion reducing the classification to the elementary system \eqref{eq:system} -- rather than on the minimal model program or Tate--Shafarevich finiteness needed in general \cite{Gross1994,EngelFilipazziGreerMauriSvaldi2025}.
	
	Three questions remain open: extending $\mathcal C_n$ beyond $n=3$; determining whether $n$ itself is bounded (Remark \ref{rem:noglobalbound}), a question the general boundedness theorem of \cite{EngelFilipazziGreerMauriSvaldi2025} does not answer directly; and combining this construction with the weighted-fibre families cited in \S1 toward a full explicit classification of low-degree K3-fibred Calabi--Yau threefolds in scrolls over $\PP^1$, for which Theorem \ref{thm:system} already supplies the complete combinatorial input.

\end{document}